\newcommand{\norm}[1]{\left\lVert#1\right\rVert}
\newcommand{\xddots}{%
  \raise 4pt \hbox {.}
  \mkern 6mu
  \raise 1pt \hbox {.}
  \mkern 6mu
  \raise -2pt \hbox {.}
}
\DeclareMathOperator*{\SubjectTo}{Subject\phantom{a}to:}
\DeclareMathOperator*{\Minimize}{Minimize:}
\newtheorem{assumption}{\it{Assumption}}
\newtheorem{proposition}{\bf{Proposition}}
\newtheorem{remark}{\bf{Remark}}
\title{Predictive Energy Management for Mitigating Load Altering Attacks for Islanded Microgrids Using Battery Energy Storage Systems}
\author{Satish Vedula, Koto Omiloli, Ayobami Olajube, and Olugbenga Anubi \\
    Department of Electrical and Computer Engineering\\
	Florida State University\\
	Tallahassee, Florida 32310\\
    Email: \{svedula, kao23a, aolajube, oanubi\}@fsu.edu
    }
\begin{document}

\maketitle    

\section{Abstract}
An increasing number of smart devices controlling loads opens a potential pathway for false data attacks which could alter the loads. The presence of energy storage with its ability to quickly respond to discrepancies in loads offers a promising solution for security by preventing further instabilities and potential blackouts. This paper proposes a control methodology for secure predictive energy management that uses batteries to mitigate the impact of load-altering attacks. To that extent, we develop a microgrid model along with the primary control for microgrid. The developed models and the optimization algorithm are validated through a real-time numerical simulation of a modified IEEE 9 bus system involving a battery as one of the energizing sources. The results show the effectiveness of the battery in countering the load alterations.

\textbf{Keywords: Load-altering attacks, Islanded microgrids, Battery energy storage systems, Model predictive control, and Hierarchical control.}

\section{Introduction}

Islanded microgrids (MGs) are small-scale grids that can be operated unaided by the larger main MG. Typical Islanded MGs consist of numerous smaller distributed energy resources (DERs) like fuel-operated generators and battery-based sources, including but not limited to solar, wind, and other sources. The control structure employed for the Islanded MGs is multi-layered. It is classified into primary, secondary, and tertiary control layers \cite{5546958}. The primary and secondary layers consist of droop control and grid synchronization controls. Tertiary control is responsible for energy management and maintaining the power balance in MG.  

Load Altering Attack (LAA) is a cyber-physical attack against demand response (DR) and demand side management (DSM) programs \cite{7131791}. The presence of smart devices and Internet of Things (IoT) enabled \textit{controllable power loads} presents a vulnerability in terms of exposing them to cyber-attacks such as false data injections (FDI). While existing literature \cite{10156477,9206284,10383994} provides FDI detection and mitigation mechanisms using state estimation, they are heavily limited to the power transmission side. The power measurement readings at the smart meters, which comprise a part of DSM, are tampered with, resulting in false load data being generated. LAAs are classified as static and dynamic LAAs. Static LAAs focus on altering the load by shifting the values (no attack dynamics involved). However, dynamic LAAs involve more dynamics in terms of dictating the rate of change of loads along with the static shift in the load demand. Numerous researchers have been investigating the impact, detection, and mitigation of both static and dynamic LAAs on power grids \cite{5976424,7723861,7436350,9838515,9308900,10197626,9997100,10591466,9483046}. 

Dynamic LAAs and their impact on the power grid stability were studied in \cite{7131791,7723861}. In \cite{7131791}, dynamic LAAs were first introduced as opposed to the static LAAs as discussed in  \cite{5976424}. A data-driven time-frequency analysis approach for LAA detection was presented in \cite{7436350}. LAA detection, reconstruction, and mitigation using a battery is presented in \cite{9838515}. The authors proposed a sliding mode observer and a super-twisting battery control approach for LAA mitigation. A cyber-resilient economic dispatch is proposed as a solution to tackle LAA in \cite{9997100}. However, the authors did not consider battery storage and a predictive management framework. 

\begin{figure*}[h!]
    \centering
    \includegraphics[width=0.99\textwidth]{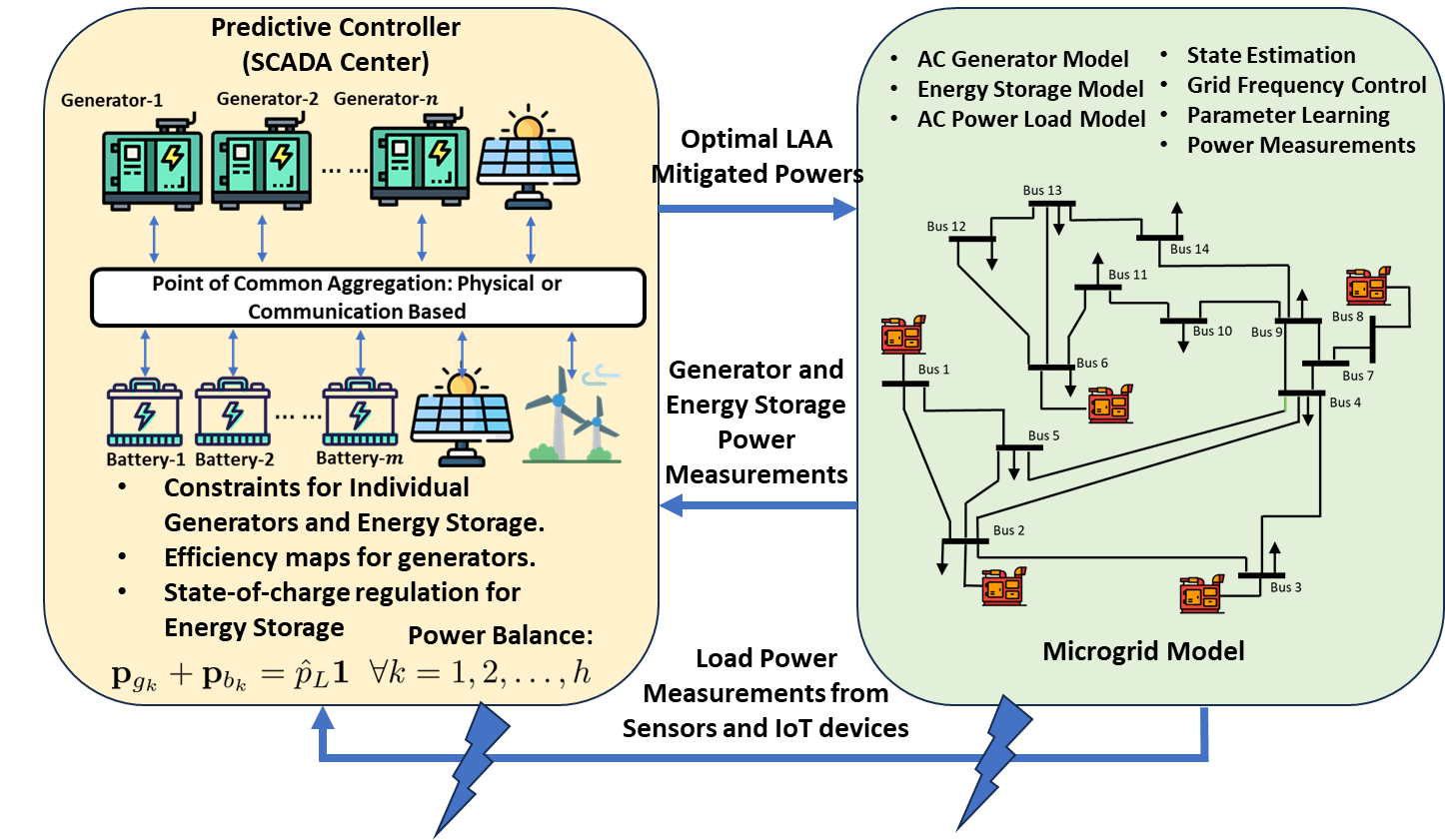}
    \caption{Proposed control methodology using predictive energy management for LAA mitigation using batteries.}
    \label{fig:overall_framework}
\end{figure*}

\textbf{The main contributions of this work are}: while the existing literature presented above focuses on LAA detection and estimation (to the best of our knowledge this is the first time a predictive energy management is used to mitigate LAAs), our method proposes the use of predictive energy management and the use of batteries to account for or mitigate the impact of both static and dynamic LAAs. We test the proposed LAA mitigating energy management framework for its real-time capabilities using a real-time simulator. The discussion presents a clear insight into the use of batteries and their role in accounting for LAAs. Fig.\ref{fig:overall_framework} shows the proposed predictive energy management framework using batteries to mitigate LAAs. The proposed framework consists of a hierarchical control structure with the primary control addressing the frequency, voltage, and current control in the MG and the secondary level addressing the energy management problem.

The rest of the paper is organized as follows: notations and required preliminaries are presented in Section \ref{sec:notations_laa}. In Section \ref{sec:mg_model_laa} we present the modeling and the control (primary) for the AC MG consisting of an AC generator, AC power loads, and DC batteries, and tailor the models to present a unified AC islanded MG. Sections \ref{sec:energymanagement_laa} and \ref{sec:realtime_sim_laa} present the proposed LAA mitigating predictive energy management strategy using batteries and a real-time numerical simulation for an AC islanded MG.

\section{Notations and Preliminaries}\label{sec:notations_laa}
The set of real numbers, and a vector containing $n$ real numbers are denoted as $\mathbb{R}$, and $\mathbb{R}^n$. Lower-case alphabets represent scalars in real space (i.e. $x \in \mathbb{R}$) and bold alphabets represent vectors in real space (i.e $\mathbf{x} \in \mathbb{R}^n$). A bold alphabet with a subscript represents the index number (i.e $\mathbf{x}_j \in \mathbb{R}, j \in \{1,2,\ldots,n\}$ is the $j^{th}$ element among $n$ elements). $\mathbf{1}$, and $\underline{\mathbf{0}}$ denotes the vector of ones and zeros. $X \in \mathbb{R}^{a \times b}$ denotes a real matrix with $a$ rows and $b$ columns. $X^\top$ denotes the transpose of the matrix $X$. The space of bounded and square-integrable functions is denoted by $\mathcal{L}_{\infty}$, and $\mathcal{L}_2$. For a vector $\mathbf{x} \in \mathbb{R}^n$, we denote the 2-norm, and the 1-norm respectively as  $\|\mathbf{x}\|_2 \triangleq \sqrt{\mathbf{x}^\top\mathbf{x}}$ and $\|\mathbf{x}\|_1
 \triangleq \sum_{i=1}^{n}|\mathbf{x}_i|$. The dot product/inner product of the two vectors $\mathbf{x} \text{ and } \mathbf{y} \in \mathbb{R}^n$ is denoted as $\mathbf{x}^\top \mathbf{y}$. If $n=2$, the cross product/outer product is denoted as $\mathbf{x}^\top J \mathbf{y}$, where $J \triangleq\begin{bmatrix}
     0 & 1 \\ -1 & 0
 \end{bmatrix}$.

\begin{figure}
    \centering
    \includegraphics[width=0.9\textwidth]{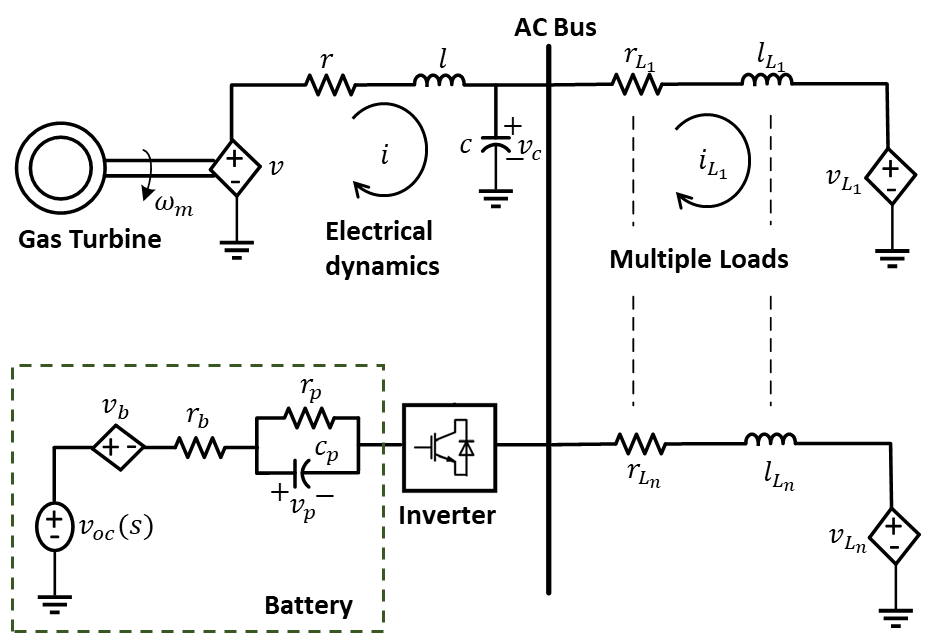}
    \caption{AC microgrid consisting of an AC generator, DC battery connected to the AC bus through an inverter, and multiple AC power loads}
    \label{fig:circuit_acgrid}
\end{figure}

\section{Islanded Microgrid}\label{sec:mg_model_laa}
In this work, we consider an AC islanded microgrid model. A typical AC microgrid consists of multiple AC generators individually driven by a turbine governor, multiple AC controllable power loads, and a few inverter-based resources such as batteries, solar photovoltaics, and wind farms. However, given the non-deterministic nature of solar and wind power generation, the power generated through them is stored in the batteries and then used for power dispatch. Most generators in islanded microgrids exhibit grid-forming capabilities by providing the necessary inertia to support the grid stability, rigidity, and overall performance\cite{du2020modeling}. The IBRs integrated at some points in such grids can either operate in the grid-following mode or grid-forming mode via intelligent control and synchronization techniques \cite{rathnayake2021grid}. Therefore, this section presents the modeling and control of AC generators, AC controllable power loads, and batteries based on Fig.~\ref{fig:circuit_acgrid}.

\subsection{Gas turbine driven AC generator modeling and control}
The AC generator model consists of a gas-turbine-driven prime mover that acts as a mechanical input to the generator. In traditional microgrids, the grid frequency is regulated by operating the rotating prime mover at a known rated speed. The model of the prime mover (mechanical), along with the electrical dynamics of a single AC generator in dq coordinates is given as
\begin{subequations}\label{ac_dynamics}
\begin{align}
    \tau \dot{\omega}_m &= -d \omega_m + T_m - T_e, \\
    l \dot{\mathbf{i}} &= -(rI - l\omega_e J)\mathbf{i} + \mathbf{v} - \mathbf{v}_c, \\
    c \dot{\mathbf{v}}_c &= \mathbf{i} - \mathbf{i}_{r} + \omega_e c J \mathbf{v}_c,
\end{align}
\end{subequations}
where $\omega_m, T_m \in \mathbb{R}$ denotes the mechanical speed and the mechanical torque of the prime mover, $\omega_e$ denotes the electrical speed, the relationship between mechanical and the electrical speed depends on the number of poles of the generator ($z$) and is given as $\omega_e = z\omega_m/2$. $T_e = \mathbf{v}_c^\top \mathbf{i}/\omega_e$ denotes the electrical torque of the generator. $\mathbf{i}, \mathbf{v}_c \in \mathbb{R}^2$ denotes the generator current and the capacitor's terminal voltage. $\mathbf{v} \in \mathbb{R}^2$ denotes the controllable voltage of the generator (control input). $\mathbf{i}_{r} \in \mathbb{R}^2$ denotes the reference current (usually determined by the load current, more on this will be discussed/presented in subsequent developments). $\tau, d, l, r, c \in \mathbb{R}$ denote the moment of inertia (\textsf{kg-m$^2$}), damping, inductance (\textsf{Henry}), resistance (\textsf{Ohm}), and terminal capacitance (\textsf{Farad}) of the mechanical prime mover and electrical generator circuit respectively. Two control objectives (one mechanical and one electrical): to regulate the prime mover speed $\omega_m$ to the synchronous speed for \eqref{ac_dynamics}a (for example, in the United States the operating frequency is 60 \textsf{Hz}, so the reference mechanical speed is generated as $\omega_{m_r} = 4\pi60/z$), and to regulate the terminal capacitance-voltage $\mathbf{v}_c$ to a known nominal reference voltage $\mathbf{v}_{{c}_r}$ for \eqref{ac_dynamics}b-c are designed and analyzed next. 
\begin{assumption}
    The prime mover dynamics in \eqref{ac_dynamics}a are dependent on the electrical torque $T_e$, which is a function of the capacitor voltage $\mathbf{v}_c$ and generator current $\mathbf{i}$. It is a well-known fact that electrical dynamics are fast compared to mechanical dynamics. Therefore we assume that $T_e = \mathbf{v}_{c_r}^\top \mathbf{i}/\omega_e$.
\end{assumption}

For regulating the prime mover speed to the synchronous speed, we employ a PI controller, and it is designed as follows
\begin{align*}
    T_m = T_e + k_P (\omega_m(t) - \omega_{m_r}) + k_I \int_{0}^{\infty} (\omega_m(\nu) - \omega_{m_r})d\nu,
\end{align*}
where $k_P, k_I$ are the proportional and integral gains. Next, we develop a parameter learning controller for the electrical dynamics of the generator. Consider the terminal voltage control error as
\begin{align*}
    \mathbf{\tilde{v}}_c = \mathbf{v}_c - \mathbf{v}_{c_r}.
\end{align*}
Taking the first time-derivative, multiplying by $c$ and substituting \eqref{ac_dynamics}c yields,
\begin{align*}
    c\mathbf{\dot{\tilde{v}}}_c = \mathbf{i} - \mathbf{i}_{r} + \omega_e cJ\mathbf{v}_c.
\end{align*}
Next, for some $\alpha > 0$ consider filtered error dynamics as
\begin{align}\label{filtered_error_dynamics}
    \boldsymbol{\eta} = \mathbf{\dot{\tilde{v}}}_c + \alpha \mathbf{\tilde{v}}_c.
\end{align}
\begin{remark}
    It is assumed that the terminal voltage control error derivative $\mathbf{\dot{\tilde{v}}}_c$ is measurable. In cases where this is not a feasible assumption, a derivative filter (with high gain) can be designed to estimate $\mathbf{\dot{\tilde{v}}}_c$ \cite{Dixon} given as.
    \begin{align*}
    \mathbf{\dot{v}}_{f} = -k_1 \mathbf{v}_{f} - k_1^2 \mathbf{\tilde{v}}_c, &\hspace{4mm}
    \mathbf{\tilde{v}}_{f} = \mathbf{v}_{f} + k_1 \mathbf{\tilde{v}}_{c},
\end{align*}
where $\mathbf{v}_f$ is an auxiliary filtered derivative state and $\mathbf{\tilde{v}}_f$ approximates $\mathbf{\dot{\tilde{v}}}_c$ for $k_1 >> 0$.
\end{remark}

Taking the first time-derivative of \eqref{filtered_error_dynamics}, multiplying with $cl$ and substituting \eqref{ac_dynamics}b-c yields the following open-loop filtered error dynamics,
\begin{align}\label{open_loop_filtered_error}
cl\boldsymbol{\dot{\eta}} = -(rI-l\omega_e J)\mathbf{i} + \omega_e J \mathbf{i} - l \omega_e  J \mathbf{i}_r - cl\omega_e^2 \mathbf{v}_c + \alpha l(\mathbf{i} - \mathbf{i}_r + c\omega_e J \mathbf{v}_c) + \mathbf{v} - \mathbf{v}_c.
\end{align}
The dynamics in \eqref{open_loop_filtered_error} can be linearly parameterized \cite{Dixon} into an unknown vector of parameters and a known matrix of measurements known as the regressor matrix. Consequently, the dynamics in \eqref{open_loop_filtered_error} are expressed as
\begin{align}\label{parameterized_open_loop}
    cl\boldsymbol{\dot{\eta}} = Y\boldsymbol{\theta} + \mathbf{v} - \mathbf{v}_c,
\end{align}
where
$$Y \hspace{-1mm} = \hspace{-1mm}\begin{bmatrix} -\mathbf{i} & 2\omega_e J\mathbf{i}-\omega_e J \mathbf{i}_r + \alpha(\mathbf{i} - \mathbf{i}_r) & -\omega_e^2\mathbf{v}_c + \alpha\omega_e J \mathbf{v}_c \end{bmatrix}$$  
$$\boldsymbol{\theta} = \begin{bmatrix} r & l & lc \end{bmatrix}^\top.$$

Next, for some $k > 0$, consider the following control law,
\begin{align}\label{control_law_acgenerator}
    \mathbf{v} = -Y\boldsymbol{\hat{\theta}} - k\boldsymbol{\eta} + \mathbf{v}_{c_r}.
\end{align}
Substituting the control law \eqref{control_law_acgenerator} in the open-loop filtered error dynamics in \eqref{parameterized_open_loop} yields the following closed-loop filtered error dynamics
\begin{align}\label{closed_loop_acdynamics}
    cl\boldsymbol{\dot{{\eta}}} = Y\underbrace{(\boldsymbol{\theta} - \boldsymbol{\hat{\theta}})}_{\boldsymbol{\tilde{\theta}}} - k\boldsymbol{\eta} - \mathbf{\tilde{v}}_c.
\end{align}
$\boldsymbol{\hat{\theta}} \in \mathbb{R}^3$ is the estimate of the parameters to be learned. Consequently, given $\gamma > 0$, the dynamic update law for the parameter estimation is 
\begin{align}\label{parameter_proj_update}
    \boldsymbol{\dot{\hat{\theta}}} = \gamma Y^\top \boldsymbol{\eta}
\end{align}

\begin{proposition}
    The control law in \eqref{control_law_acgenerator}, along with the parameter update law in \eqref{parameter_proj_update}, globally asymptotically stabilizes the origin of the filtered open-loop error dynamics in \eqref{parameterized_open_loop}.
\end{proposition}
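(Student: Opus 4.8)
\section*{Proof proposal}

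The plan is to certify global asymptotic stability with a composite Lyapunov function that augments the filtered-error energy with the voltage-error energy and the parameter-estimation error, and then to upgrade the resulting negative semidefinite bound to convergence via Barbalat's lemma. Concretely, I would take
\[
V \;=\; \tfrac{1}{2}\,cl\,\|\boldsymbol{\eta}\|_2^2 \;+\; \tfrac{1}{2}\,\|\mathbf{\tilde{v}}_c\|_2^2 \;+\; \tfrac{1}{2\gamma}\,\|\boldsymbol{\tilde{\theta}}\|_2^2 ,
\]
which is positive definite and radially unbounded in the extended state $(\boldsymbol{\eta},\mathbf{\tilde{v}}_c,\boldsymbol{\tilde{\theta}})$ because $c,l,\gamma>0$ (recall $\boldsymbol{\eta}$ and $\mathbf{\tilde{v}}_c$ together encode $(\mathbf{\tilde{v}}_c,\mathbf{\dot{\tilde{v}}}_c)$ through \eqref{filtered_error_dynamics}).

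First I would differentiate $V$ along the closed loop. Substituting the closed-loop filtered-error dynamics \eqref{closed_loop_acdynamics} and the update law \eqref{parameter_proj_update} gives
\[
\dot V \;=\; \boldsymbol{\eta}^\top\!\big(Y\boldsymbol{\tilde{\theta}} - k\boldsymbol{\eta} - \mathbf{\tilde{v}}_c\big) \;+\; \mathbf{\tilde{v}}_c^\top\mathbf{\dot{\tilde{v}}}_c \;-\; \boldsymbol{\tilde{\theta}}^\top Y^\top\boldsymbol{\eta} .
\]
The indefinite cross terms $\boldsymbol{\eta}^\top Y\boldsymbol{\tilde{\theta}}$ and $-\boldsymbol{\tilde{\theta}}^\top Y^\top\boldsymbol{\eta}$ cancel --- this cancellation is exactly why the gradient law $\boldsymbol{\dot{\hat{\theta}}}=\gamma Y^\top\boldsymbol{\eta}$ is the right choice. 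I would then use $\mathbf{\dot{\tilde{v}}}_c = \boldsymbol{\eta}-\alpha\mathbf{\tilde{v}}_c$ from \eqref{filtered_error_dynamics} to rewrite $\mathbf{\tilde{v}}_c^\top\mathbf{\dot{\tilde{v}}}_c$ and absorb the leftover term $-\boldsymbol{\eta}^\top\mathbf{\tilde{v}}_c$, obtaining the clean bound
\[
\dot V \;=\; -\,k\,\|\boldsymbol{\eta}\|_2^2 \;-\; \alpha\,\|\mathbf{\tilde{v}}_c\|_2^2 \;\le\; 0 .
\]

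From $\dot V\le 0$ the standard conclusions follow: $V$ is nonincreasing, hence $\boldsymbol{\eta},\mathbf{\tilde{v}}_c,\boldsymbol{\tilde{\theta}}\in\mathcal{L}_\infty$ (Lyapunov stability of the origin plus boundedness of $\boldsymbol{\hat{\theta}}$), and integrating the bound gives $\boldsymbol{\eta},\mathbf{\tilde{v}}_c\in\mathcal{L}_2$. Since $\mathbf{\dot{\tilde{v}}}_c=\boldsymbol{\eta}-\alpha\mathbf{\tilde{v}}_c\in\mathcal{L}_\infty$, $\mathbf{\tilde{v}}_c$ is uniformly continuous and Barbalat's lemma yields $\mathbf{\tilde{v}}_c\to 0$. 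To also get $\boldsymbol{\eta}\to 0$, I would show $\boldsymbol{\dot{\eta}}\in\mathcal{L}_\infty$ from \eqref{closed_loop_acdynamics}: the terms $k\boldsymbol{\eta}$ and $\mathbf{\tilde{v}}_c$ are already bounded, so it remains to bound the regressor term $Y\boldsymbol{\tilde{\theta}}$, i.e. to bound $Y$, whose entries depend on $\mathbf{i}$, $\mathbf{i}_r$, $\omega_e$ and on $\mathbf{v}_c=\mathbf{\tilde{v}}_c+\mathbf{v}_{c_r}$ (the last already bounded). With $Y\in\mathcal{L}_\infty$, Barbalat's lemma gives $\boldsymbol{\eta}\to 0$, establishing global asymptotic stability of the origin of \eqref{parameterized_open_loop}; the parameter error $\boldsymbol{\tilde{\theta}}$ remains bounded and converges to zero only under an additional persistency-of-excitation condition on $Y$, which I would record as a remark rather than claim in the proposition.

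The main obstacle is precisely this boundedness of the regressor $Y$ along closed-loop trajectories: the filtered-error analysis by itself says nothing about the generator current $\mathbf{i}$, the reference current $\mathbf{i}_r$, or the electrical speed $\omega_e$. I would handle it in one of two ways: either (i) invoke the structure of the full interconnected model \eqref{ac_dynamics} together with the prime-mover PI loop and the time-scale-separation assumption $T_e=\mathbf{v}_{c_r}^\top\mathbf{i}/\omega_e$ to argue that $\omega_e$ is regulated near its synchronous value and the currents stay bounded for bounded load/reference signals, or (ii) adopt a mild standing assumption that the exogenous reference signals and the physical currents lie in $\mathcal{L}_\infty$. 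Option (ii) gives the cleanest statement; all remaining manipulations --- the cancellations in $\dot V$ and the Barbalat bookkeeping --- are routine.
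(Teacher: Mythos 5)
Your proposal is correct and follows essentially the same route as the paper: the identical composite Lyapunov function $V=\tfrac{cl}{2}\|\boldsymbol{\eta}\|^2+\tfrac{1}{2\gamma}\|\boldsymbol{\tilde{\theta}}\|^2+\tfrac{1}{2}\|\mathbf{\tilde{v}}_c\|^2$, the cancellation of the $Y\boldsymbol{\tilde{\theta}}$ cross term by the gradient update law, the bound $\dot V=-k\|\boldsymbol{\eta}\|^2-\alpha\|\mathbf{\tilde{v}}_c\|^2$, and the $\mathcal{L}_2$/$\mathcal{L}_\infty$ plus Barbalat argument. The only difference is that you explicitly flag the need for $Y\in\mathcal{L}_\infty$ (boundedness of $\mathbf{i}$, $\mathbf{i}_r$, $\omega_e$) to conclude $\boldsymbol{\dot{\eta}}\in\mathcal{L}_\infty$, a point the paper's proof simply asserts; your suggested standing assumption or interconnection argument is a reasonable way to close that gap.
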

\begin{proof}
    Consider the following Lyapunov candidate function
    \begin{align*}
        V = \frac{cl}{2}\boldsymbol{\eta}^\top \boldsymbol{\eta} + \frac{1}{2\gamma}\boldsymbol{\tilde{\theta}}^\top \boldsymbol{\tilde{\theta}} + \frac{1}{2}\tilde{\mathbf{v}}_c^\top \tilde{\mathbf{v}}_c.
    \end{align*}
Taking the first derivative along time variables yields,
\begin{align*}
    \dot{V} = \boldsymbol{\eta}^\top cl\boldsymbol{\dot{\eta}} - \frac{1}{2}\boldsymbol{\tilde{\theta}}^\top \boldsymbol{\dot{\hat{\theta}}} + \mathbf{\tilde{\mathbf{v}}}_c^\top \mathbf{\dot{\tilde{\mathbf{v}}}}_c.
\end{align*}
Substituting \eqref{closed_loop_acdynamics}, \eqref{parameter_proj_update}, and \eqref{filtered_error_dynamics} yields,
\begin{align*}
\nonumber    \dot{V} = \boldsymbol{\eta}^\top (Y\boldsymbol{\tilde{\theta}} - k\boldsymbol{\eta} - \mathbf{\tilde{v}}_c) - \frac{1}{\gamma}\boldsymbol{\tilde{\theta}}^\top \gamma Y^\top \boldsymbol{\eta} + \mathbf{\tilde{\mathbf{v}}}_c^\top(\boldsymbol{\eta} - \alpha \mathbf{\tilde{v}}_c).
\end{align*}
Rearranging yields,
\begin{align}\label{lyapunov_nsd}
    \dot{V} = -k\norm{\boldsymbol{\eta}}^2 - \alpha \norm{\mathbf{\tilde{v}}_c}^2.
\end{align}
This implies $\dot{V}$ is negative semi-definite, it follows that $V \in \mathcal{L}_{\infty}$, which implies that $\boldsymbol{\eta},\boldsymbol{\tilde{\theta}},\mathbf{\tilde{v}}_c \in \mathcal{L}_{\infty}$. Integrating \eqref{lyapunov_nsd} yields,
\begin{align*}
    V(\infty) - V(0) \leq - \int_{0}^{\infty}(k\norm{\boldsymbol{\eta}(\nu)}^2 + \alpha \norm{\mathbf{\tilde{v}}_c(\nu)}^2)d\nu,
\end{align*}
from which it follows that $\boldsymbol{\eta},\mathbf{\tilde{v}}_c \in \mathcal{L}_2$, moreover $\boldsymbol{\dot{\eta}},\mathbf{\dot{\tilde{v}}}_c \in \mathcal{L}_{\infty}$ implying uniform continuity. From Barbalat's lemma it follows that $\boldsymbol{\eta},\mathbf{\tilde{v}}_c \longrightarrow \underline{\mathbf{0}}$.
\end{proof}

\subsection{AC power load model and control}
The AC power load draws the desired current from the generators and other power sources from the grid. A load current controller ensures that the desired current is drawn from the grid. First, we present the dynamics of a single AC power load in dq coordinates as
\begin{align}\label{ac_powerload_dynamics}
    l_L \mathbf{\dot{i}}_L = -(r_LI - l_L\omega_e J)\mathbf{i}_L + \mathbf{v}_L - \mathbf{v}_g,
\end{align}
where $\mathbf{i}_L, \mathbf{v}_L \in \mathbb{R}^2$ are the load current and controlled load voltage. $\mathbf{v}_g$ denotes the grid voltage to which the load is connected. $r_L, l_L \in \mathbb{R}$ are the load resistance and the inductance respectively. Considering the grid voltage, the voltage drops across the load resistance, and load inductance as disturbances and assuming that the disturbances are upper bounded by a known constant $\overline{L}$ i.e. $-(r_LI - l_L\omega_e J)\mathbf{i}_L - \mathbf{v}_g \triangleq f_L(\mathbf{i}_L,\mathbf{v}_g)$, and $\norm{f_L(\mathbf{i}_L,\mathbf{v}_g)} < \overline{L}$, the \eqref{ac_powerload_dynamics} can be expressed as
\begin{align}\label{ac_powerload_bounded_dynamics}
     l_L \mathbf{\dot{i}}_L = f_L(\mathbf{i}_L,\mathbf{v}_g) + \mathbf{v}_L.
\end{align} 
Next, we present the current control design for the AC power load. Consider the load current error as follows
\begin{align}
    \mathbf{\tilde{i}}_L = \mathbf{i}_L - \mathbf{i}_{L_r}.
\end{align}
For some $\alpha > 0$, we define a sliding manifold as
\begin{align}
 \boldsymbol{\sigma} = \mathbf{\tilde{i}}_L.
\end{align}
Taking the derivative along the time variables and multiplying with $l_L$ yields,
\begin{align}\label{sliding_derivative_acpl}
    l_L \boldsymbol{\dot{\sigma}} = l_L\mathbf{\dot{\tilde{i}}}_L.
\end{align}
Substituting \eqref{ac_powerload_bounded_dynamics} yields,
\begin{align}\label{openloop_acpl_sliding}
    l_L \boldsymbol{\dot{\sigma}} = f_L(\mathbf{i}_L,\mathbf{v}_g) + \mathbf{v}_L,
\end{align}
Consequently, for some $k,\rho > 0$, the control law $\mathbf{v}_L$ is designed as
\begin{align}\label{ac_pl_controllaw}
    \mathbf{v}_L &= -\rho \textsf{sign}(\boldsymbol{\sigma}) - k\boldsymbol{\sigma},
\end{align}

\begin{proposition}
    The control law in \eqref{ac_pl_controllaw} stabilizes the origin of the system in \eqref{openloop_acpl_sliding} globally asymptotically. Moreover, it drives $\boldsymbol{\sigma}$ to zero asymptotically ($\norm{\boldsymbol{\sigma(0)}} \neq 0$) for some $\rho > \overline{L}$ in finite time given by 
    \begin{align}\label{slifding_reaching_cond}
        t_r = -\frac{l_L}{k}\textsf{log}\Bigg(\frac{\frac{\rho - \overline{L}}{k}}{\norm{\boldsymbol{\sigma(0)}} + \frac{\rho - \overline{L}}{k}}\Bigg) 
    \end{align}
\end{proposition}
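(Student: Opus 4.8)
The plan is to certify the closed loop with the quadratic Lyapunov candidate $V=\tfrac12\boldsymbol{\sigma}^\top\boldsymbol{\sigma}$. First I would substitute the control law \eqref{ac_pl_controllaw} into \eqref{openloop_acpl_sliding} to obtain $l_L\boldsymbol{\dot{\sigma}} = f_L(\mathbf{i}_L,\mathbf{v}_g) - \rho\,\textsf{sign}(\boldsymbol{\sigma}) - k\boldsymbol{\sigma}$, then differentiate $V$ along trajectories and multiply by $l_L$ to get $l_L\dot V = \boldsymbol{\sigma}^\top f_L - \rho\,\boldsymbol{\sigma}^\top\textsf{sign}(\boldsymbol{\sigma}) - k\norm{\boldsymbol{\sigma}}^2$. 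Each term is bounded separately: Cauchy--Schwarz together with the disturbance bound $\norm{f_L}<\overline{L}$ gives $\boldsymbol{\sigma}^\top f_L \le \overline{L}\norm{\boldsymbol{\sigma}}$; the identity $\boldsymbol{\sigma}^\top\textsf{sign}(\boldsymbol{\sigma}) = \norm{\boldsymbol{\sigma}}_1 \ge \norm{\boldsymbol{\sigma}}_2 = \norm{\boldsymbol{\sigma}}$ yields $-\rho\,\boldsymbol{\sigma}^\top\textsf{sign}(\boldsymbol{\sigma}) \le -\rho\norm{\boldsymbol{\sigma}}$; and the last term is already in closed form. Collecting these, $l_L\dot V \le -(\rho-\overline{L})\norm{\boldsymbol{\sigma}} - k\norm{\boldsymbol{\sigma}}^2$, which for $\rho>\overline{L}$ is negative definite; since $V$ is radially unbounded, this establishes global asymptotic stability of $\boldsymbol{\sigma}=\underline{\mathbf{0}}$ (indeed, the $-(\rho-\overline{L})\norm{\boldsymbol{\sigma}}$ term alone already forces finite-time convergence).

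For the explicit reaching time I would pass to the scalar variable $s(t)\triangleq\norm{\boldsymbol{\sigma}(t)}$, so that $V=\tfrac12 s^2$ and hence $l_L s\dot s = l_L\dot V \le -(\rho-\overline{L})s - ks^2$; wherever $s>0$ this reduces to the scalar differential inequality $l_L\dot s \le -(\rho-\overline{L}) - ks$. Invoking the comparison lemma, $s(t)$ is bounded above by the solution $z(t)$ of the linear ODE $l_L\dot z = -(\rho-\overline{L}) - kz$ with $z(0)=\norm{\boldsymbol{\sigma}(0)}$, which integrates to $z(t) = \big(\norm{\boldsymbol{\sigma}(0)} + \tfrac{\rho-\overline{L}}{k}\big)e^{-kt/l_L} - \tfrac{\rho-\overline{L}}{k}$. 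Solving $z(t_r)=0$ for $t_r$ reproduces exactly \eqref{slifding_reaching_cond}; since $0\le s(t)\le z(t)$, the manifold $\boldsymbol{\sigma}=\underline{\mathbf{0}}$ is reached no later than $t_r$, and the strict negativity of $\dot V$ for $\boldsymbol{\sigma}\neq\underline{\mathbf{0}}$ keeps the trajectory there afterward.

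The main obstacle is the non-smoothness of the right-hand side at $\boldsymbol{\sigma}=\underline{\mathbf{0}}$: both $\textsf{sign}(\boldsymbol{\sigma})$ and the Euclidean norm are nondifferentiable there, so the computation of $\dot V$ must be read in the Filippov / differential-inclusion sense and the scalar comparison for $\dot s$ is only valid on $\{s>0\}$. I would handle this by using the set-valued sign, noting that $\boldsymbol{\sigma}^\top\textsf{sign}(\boldsymbol{\sigma}) = \norm{\boldsymbol{\sigma}}_1$ away from the origin and that the bound $l_L\dot V \le -(\rho-\overline{L})\norm{\boldsymbol{\sigma}} - k\norm{\boldsymbol{\sigma}}^2$ holds for every element of the generalized derivative, so the origin remains (finite-time) attractive and the reaching-time estimate is unaffected.
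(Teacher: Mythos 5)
Your proposal is correct and follows essentially the same route as the paper: the quadratic Lyapunov function (yours differs only by the constant factor $l_L$), the bound $l_L\dot V \le -(\rho-\overline{L})\norm{\boldsymbol{\sigma}} - k\norm{\boldsymbol{\sigma}}^2$, and reduction to the linear first-order ODE in $\norm{\boldsymbol{\sigma}}$ whose zero-crossing gives \eqref{slifding_reaching_cond} — the paper's change of variable $u=\sqrt{2V/l_L}$ is exactly your $s=\norm{\boldsymbol{\sigma}}$. Your explicit use of the comparison lemma (so $t_r$ is an upper bound on the reaching time) and the Filippov reading of $\textsf{sign}(\boldsymbol{\sigma})$ at the origin are refinements the paper leaves implicit, but the argument is the same.
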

\begin{proof}
    Consider the following Lyapunov function candidate
    \begin{align*}
        V = \frac{l_L}{2}\boldsymbol{\sigma}^\top \boldsymbol{\sigma}.
    \end{align*}
    Taking the first derivative along the time variables and substituting \eqref{openloop_acpl_sliding} yields,
    \begin{align*}
        \dot{V} = \boldsymbol{\sigma}^\top (f_L(\mathbf{i}_L,\mathbf{v}_g) + \mathbf{v}_L)
    \end{align*}
    Next, substituting the control law \eqref{ac_pl_controllaw} yields,
    \begin{align*}
        \dot{V} &= \boldsymbol{\sigma}^\top f_L(\mathbf{i}_L,\mathbf{v}_g) - \rho \boldsymbol{\sigma}^\top \textsf{sign}(\boldsymbol{\sigma}) -k \boldsymbol{\sigma}^\top \boldsymbol{\sigma}, \\
        &\leq \norm{\boldsymbol{\sigma}} \norm{f_L(\mathbf{i}_L,\mathbf{v}_g)} - \rho \norm{\boldsymbol{\sigma}} - k\norm{\boldsymbol{\sigma}}^2, \\
        &\leq -(\rho - \overline{L})\norm{\boldsymbol{\sigma}}-k\norm{\boldsymbol{\sigma}}^2
    \end{align*}
    On observation that
    \begin{align}\label{lyap_diff_inequality}
        \dot{V} \leq -(\rho - \overline{L})\sqrt{\frac{2V}{l_L}} - \frac{2k}{l_L}V.
    \end{align}
    Now let us consider the boundary of the differential inequality in \eqref{lyap_diff_inequality} and make a change of variable as $\sqrt{\frac{2V}{l_L}} = u$ and observe that $V = \frac{l_L}{2}u^2$, $\dot{V} = l_L u \frac{du}{dt}$. Then, the differential inequality in \eqref{lyap_diff_inequality} becomes a linear first-order differential equation (ODE) in $u$ as
    \begin{align}\label{lyap_linear_ineq}
        \frac{du}{dt} = -\frac{\rho - \overline{L}}{l_L} - \frac{k}{l_L}u.
    \end{align}
    The solution to the ODE in \eqref{lyap_linear_ineq} given an initial condition of $u(0)$ is
    \begin{align}
        u(t) = -\frac{\rho - \overline{L}}{k} + \bigg(u(0) + \frac{\rho - \overline{L}}{k}\bigg)e^{-\frac{k}{l_L}t}.
    \end{align}
   Reverting to the terms of $V(t)$, and at $t = t_r$ yields,
   \begin{align*}
       V(t_r) = \frac{l_L}{2} \Bigg[-\frac{\rho - \overline{L}}{k} + \bigg(u(0) + \frac{\rho - \overline{L}}{k}\bigg)e^{-\frac{k}{l_L}t_r}\bigg]^2.
   \end{align*}
   At the reaching time $t_r$, $V(t_r) = 0$, yields the reaching condition in \eqref{slifding_reaching_cond}  
   \begin{align}
        t_r = -\frac{l_L}{k}\textsf{log}\Bigg(\frac{\frac{\rho - \overline{L}}{k}}{\norm{\boldsymbol{\sigma(0)}} + \frac{\rho - \overline{L}}{k}}\Bigg).
   \end{align}
\end{proof}
\begin{remark}
    It is evident from the observation that the logarithmic term $\frac{\rho - \overline{L}}{k}/{\norm{\boldsymbol{\sigma(0)}} + \frac{\rho - \overline{L}}{k}}$ is always between 0 and 1 for the conditions in the theorem statement.
\end{remark}

\subsection{Battery model and state of charge estimation}
Thevinin's model captures the battery operation's internal resistance, controllable voltage, and polarization effects. The dynamics of the battery model (Li-ion considered in this case) along with the state of charge are
 \begin{align}
    v_t &= v_{oc}(s) - v_r - v_p - v_b, \label{dynamics1}\\
    c_p \dot{v}_p &= i_b - \frac{v_p}{r_p}, \label{dynamics2} \\
    \dot{s} &= -\frac{i_b}{3600 Q_b} \label{dynamics3}
 \end{align}
 where $s \in \mathbb{R}$ is the state-of-charge (SoC) of the battery. $v_t, v_r, v_{oc}, v_p, v_b, i_b \in \mathbb{R}$ are the battery terminal voltage, voltage across internal resistance, open-circuit voltage (OCV), polarization voltage, the controllable battery voltage, and the battery current. The battery current is controlled via a controllable battery voltage $v_b$ and is given as 
 \begin{equation}
     i_b = \frac{v_t - v_b - v_{oc} - v_p}{r_b}
 \end{equation}
 $r_b, r_p, c_p, Q_b$ denote the internal resistance, polarization resistance in \textsf{Ohms}, capacitance in \textsf{Farads}, and the battery capacity in \textsf{AHr}. The battery open circuit voltage $v_{oc}$ is a function of the battery's SoC. The relationship is modeled as a linear relationship \cite{10.1115/DSCC2013-3979}. Consequently, the OCV is presented in terms of SoC as $v_{oc} = \beta_1 s +\beta_2$ (more on the choice of constants $\beta_1,\beta_2$ is presented in the numerical simulation section).
 \begin{remark}
 In this work, we assume that the battery parameters (nominal) are known. However, if the battery parameters $(r_b, r_p, c_p)$ are unknown, they can be estimated by minimizing the objective function $\sum \limits_{i=1}^{n} (\hat{v}_i - v_i) $ where $v_i$ is the battery's terminal voltage obtained from measurement data and $\hat{v}_i$ is the discretized terminal voltage model.
 \end{remark}

\subsection{AC microgrid}
The overall model of an islanded AC microgrid (with a single AC generator and an AC power load) is represented based on the previous developments as (\textbf{Note:} battery is a DC component integrated into the AC microgrid using inverters. The estimation for the battery is done separately).
\begin{subequations}\label{ac_coupled_mgmodel}
\begin{align}
    \tau \dot{\omega}_m &= -d \omega_m + T_m - T_e, \\
    l \dot{\mathbf{i}} &= -(rI - l\omega_e J)\mathbf{i} + \mathbf{v} - \mathbf{v}_c, \\
    c \dot{\mathbf{v}}_c &= \mathbf{i} - \mathbf{i}_{L} + \omega_e c J \mathbf{v}_c, \\
    l_L \dot{\mathbf{i}}_L &= -(r_LI - l_L\omega_e J)\mathbf{i}_L + \mathbf{v}_L - \
    \mathbf{v}_c.
\end{align}
\end{subequations}

    In the traditional MG, the grid voltage, load, and generator currents can be measured utilizing the sensors embedded into the MG. We assume that the currents and the voltages are measurable.

\begin{remark}
    If there are multiple AC generators and multiple AC power loads, it is straightforward to expand the model in \eqref{ac_coupled_mgmodel} to the required components. 
\end{remark}

Consequently, the active power measurements at the bus for the AC generator, the AC power load, and the battery are expressed as
\begin{align*}
    \hat{p}_g &= \frac{1}{2}\mathbf{{v}}_c^\top \mathbf{{i}}, \hspace{5mm} \hat{p}_b = v_c i_b, \hspace{5mm} \hat{p}_L = \frac{1}{2}\mathbf{{v}}_c^\top \mathbf{{i}}_L, 
\end{align*}
\textbf{Note:} $v_c \in \mathbb{R}$ is the DC voltage obtained from the AC grid voltage $\mathbf{v}_c \in \mathbb{R}^2$ as a result of the inverter/converter mechanism. 

The estimated load active powers, which are the load power measurements, are altered, and false data is injected resulting in
\begin{align*}
    \hat{p}_L = \frac{1}{2} \mathbf{{v}}_c^\top \mathbf{{i}}_L + w_L,
\end{align*}
where $w_L$ is the injected false load altering data (the choice of $w_L$ is discussed in detail in the numerical simulation section). Consequently, the power flow in the AC microgrid (with the battery acting as a supplemental support) can be expressed as
\begin{align}\label{equality_constraint}
    \hat{p}_g + \hat{p}_L + \hat{p}_b = 0.
\end{align}

Now that we have presented the primary control development, including the estimated power measurements for the elements in the MG, we next show the proposed development of a predictive energy management algorithm using the battery as a mitigation strategy for LAA. For the EM layer, we solely consider the optimal power sharing problem acting as a power reference for the designed primary controllers.

\section{Energy Management Using Battery Storage For LAA Mitigation}\label{sec:energymanagement_laa}
Consider the MPC problem of the form \cite{9658612}:
\begin{equation}\label{MPC_main}
\begin{aligned}
\Minimize_{\mathbf{p}_{i},\mathbf{s}} \quad & \sum_{{i}=1}^{n}{C}_{{i}}(\mathbf{p}_{{i}})\\
\SubjectTo \quad & \sum_{{i}=1}^{n}\mathbf{p}_{{i}} = \sum_{j=1}^{n_L}\hat{p}_L\mathbf{1}, \hspace{1mm} \forall  k = 1,\ldots,h,\\
\quad & \mathbf{s}_{k+1} = \mathbf{s}_k - \frac{T_s}{Q_b v_c}\mathbf{p}_i, \hspace{1mm} \forall  k = 1,\ldots,h, \\
\quad & \mathbf{p}_0 = \mathbf{\hat{p}}_i, \hspace{1mm} \forall k = 1,\ldots,h, \\
\quad & \underline{\mathbf{p}} \preceq \mathbf{p}_{{i}} \preceq \overline{\mathbf{p}}, \hspace{1mm} \forall  k = 1,\ldots,h,\\
& \left|\mathbf{p}_{{ik}}-\mathbf{p}_{{ik-1}} \right| \preceq {r}\mathbf{1}, \hspace{1mm} \forall  k = 1,\ldots,h, \\ 
& \underline{\mathbf{s}} \preceq \mathbf{s}_{{ik}} \preceq \overline{\mathbf{s}}, \hspace{1mm} \forall  k = 1,\ldots,h,
\end{aligned}
\end{equation}
where $i = 1,\ldots,n$ indicates the number of power source, $j = 1,\ldots,n_L$ indicates the number of loads. $k \triangleq [1,2,\ldots,h] \in \mathbb{R}^h$ is the length of the prediction horizon, $\mathbf{p}_{i} \triangleq [{\mathbf{p}_{i}}_k,{\mathbf{p}_{i}}_{k+1},\hdots,{\mathbf{p}_{i}}_{k+h-1}]^{\top}\in\mathbb{R}^h$ is the power profile for power sources $i$ over the prediction horizon of length $h$, and $\hat{p}_L\mathbf{1} \in\mathbb{R}^h$ is the desired total power held constant over the prediction horizon. $\mathbf{p}_0 \in \mathbb{R}^h$ is the initial value for the optimization and $\mathbf{\hat{p}}_i \triangleq [\hat{p}_i,0,\ldots,0]^\top \in \mathbb{R}^h$ is the estimated power measurements (for example: measured generator and battery powers). The operation cost ${C}_{i}:\mathbb{R}^{{h}}\longrightarrow\mathbb{R}_+$ is designed to capture the operating costs, for example, an efficiency map or operating at a desired rated power for an AC generator and for the battery its health monitoring and degradation management (more on the choice of $C_i$ is discussed in the numerical simulation section). ${h} \in \mathbb{N}$ represents the prediction horizon. $r \in \mathbb{R}_+$ is the ramping capability of the power source. $T_s$ is the simulation time-step. $\underline{\mathbf{p}}$ and $\overline{\mathbf{p}}$ are the lower and the upper power limitations on the power sources. $\underline{\mathbf{s}}$ and $\overline{\mathbf{s}}$ represent the lower and the upper limitations on the state of charge of the battery. The equality constraints stem from \eqref{equality_constraint} and \eqref{dynamics3}, indicating the power balance constraint and the state of charge dynamics. The optimization variables are optimized for the entire length of the horizon and the first value of the sequence is chosen as the control input. The constraint set is a polytope, which is a well-studied set, and existing methods in the literature have extensively studied the feasibility and stability of the optimization problem in \eqref{MPC_main} \cite{boyd_vandenberghe_2004}. 

\section{Real-Time Numerical Simulation}\label{sec:realtime_sim_laa}
The numerical simulations are carried out on a real-time simulator connected to a Host PC through a LAN connection (1000 \textsf{MBPS}). The computational capabilities of the HOST PC are an Intel Core i9 3.20GHz 24-core processor with 64GB of RAM. The computational capabilities of the real-time target machine are an Intel Core 3.6GHz 8-core processor with 3GB RAM. Fig.~\ref{fig:rt_simulation_setup} shows the real-time interconnection between the Host PC and the real-time simulator. The simulation is run at a fixed time-step of $1\textsf{msec}$. The optimization updates every $1\textsf{sec}$.

\begin{figure}[h!]
    \centering
    \includegraphics[width=0.9\textwidth]{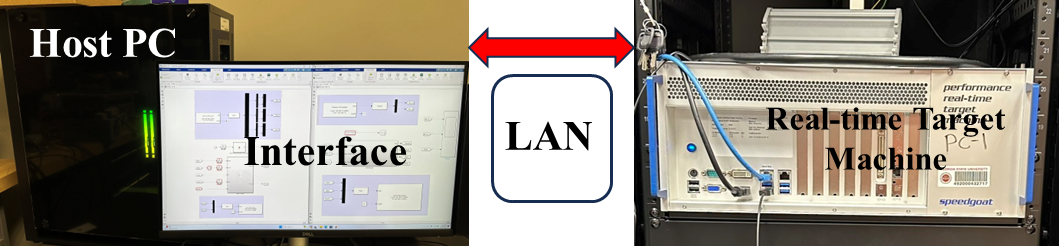}
    \caption{Numerical simulation: real-time target setup and implementation schematic.}
    \label{fig:rt_simulation_setup}
\end{figure}

\begin{figure}[h!]
    \centering
    \includegraphics[width=0.9\textwidth]{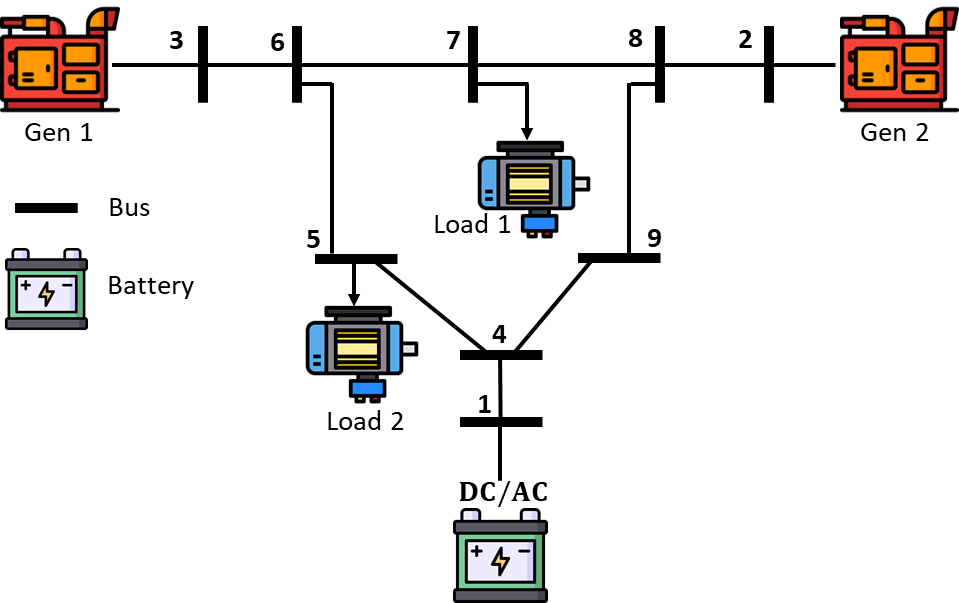}
    \caption{Modified IEEE 9 bus system used as a numerical example to test the proposed algorithm}
    \label{fig:ieee_9bus_modified}
\end{figure}

For the purpose of the simulation, we consider a modified IEEE 9 bus system consisting of two AC power loads, two AC generators, and a battery connected to the grid by means of an inverter. Fig.~\ref{fig:ieee_9bus_modified} shows the interconnection between the loads and power sources. The following are the values of the parameters for the simulation 1) for AC generator (estimated parameters) and the prime mover: $r = 0.2 \Omega$, $l = 0.03 \textsf{H}$, $c = 10\mu\textsf{F}$, $f = 60\textsf{Hz}$ (equivalent to the mechanical speed of $94.4\textsf{rad/s}$), $\tau = 2.5\textsf{kg-m$^2$}$, $d = 0.3$, 2) for the AC power load: $r_L = 0.3\Omega$, $l_L = 0.03\textsf{H}$, 3) for the battery $r_b = 0.3 \Omega$, $r_p = 0.09 \Omega$, $c_p = 10\mu\textsf{F}$. The OCV-SoC relationship is given as $v_{oc} = 1.071s + 3.357$ \textsf{kV} (linear curve-fit to the OCV-SoC data). The grid voltage is regulated to $12\textsf{kV}$. The current reference for the d-axis and the q-axis to the two AC power loads are chosen as $[6 \hspace{3mm} 0]^\top \textsf{kA}$ and $[10 \hspace{3mm} 0]^\top\textsf{kA}$. Fig.~\ref{fig:speed_voltage_track} shows the designed controller performance for the frequency regulation and the voltage regulation.

\begin{figure}
    \centering
    \includegraphics[width=0.9\textwidth]{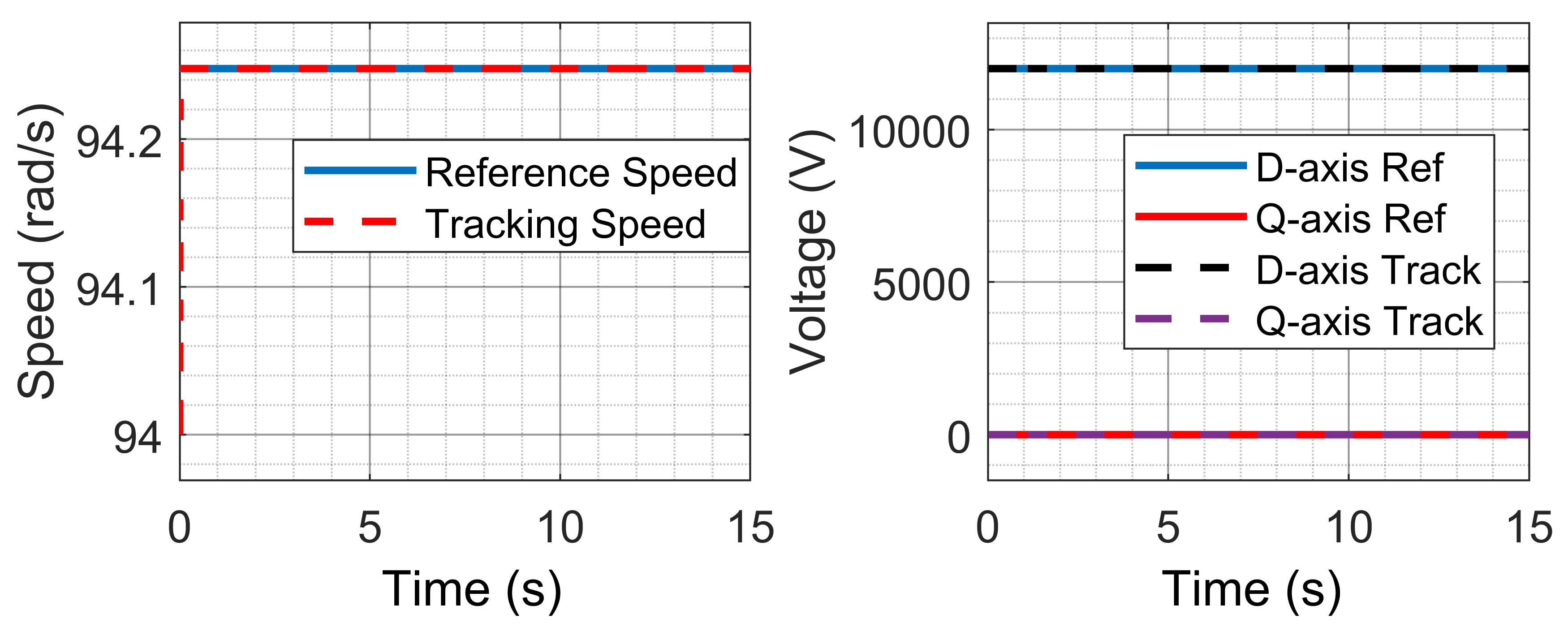}
    \caption{Tracking performance of the gas turbine speed (frequency) and the grid voltage}
    \label{fig:speed_voltage_track}
\end{figure}

\begin{figure}
    \centering
    \includegraphics[width=0.9\textwidth]{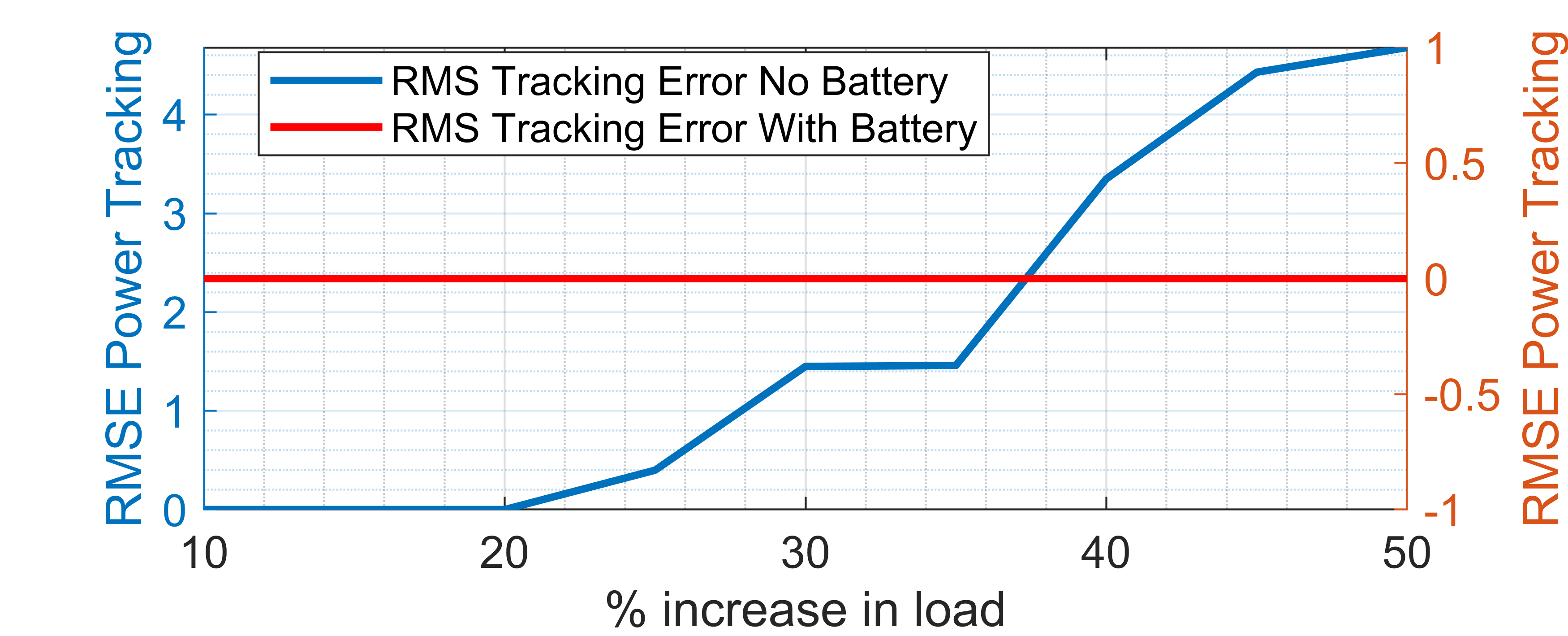}
    \caption{Root mean square power tracking error without battery on the left axis and root mean square power tracking error with the battery on the right axis versus the $\%$ alteration in the load on the X-axis.}
    \label{fig:RMSE_PT}
\end{figure}

The power ratings of the AC generators and the battery are as follows: $p_{g_1} = 24\textsf{MW}$, $p_{g_2} = 22\textsf{MW}$, $p_b = 10 \textsf{MW}$. For the optimization $n = 3$ (two generators, one battery), the operating cost in $\$/hr$ for the generators capturing the efficiency is chosen as: $C(p_{g_1}) = 0.4p_{g_1}^2 + 5.5p_{g_1} + 500$, $C(p_{g_2}) = 0.6p_{g_2}^2 + 5.3p_{g_2} + 400$. The operating cost for the battery is chosen as $C(p_b) = p_b^2$ to use the battery to negate the LAA scenarios. The optimization horizon is chosen as $h = 5\textsf{secs}$. The lower and upper power limitations on the AC generators are chosen to be $5\%$ and $95\%$ for the respective rated powers. The upper and lower power limitations and the ramping capabilities on the battery (bi-directional) are chosen as $95\%$ of the rated battery power. The negative power indicates that the battery is charging and the positive power indicates that the battery is discharging. The battery capacity $Q_b = 25\textsf{AHr}$. 

\begin{figure}
    \centering
    \includegraphics[width=0.9\textwidth]{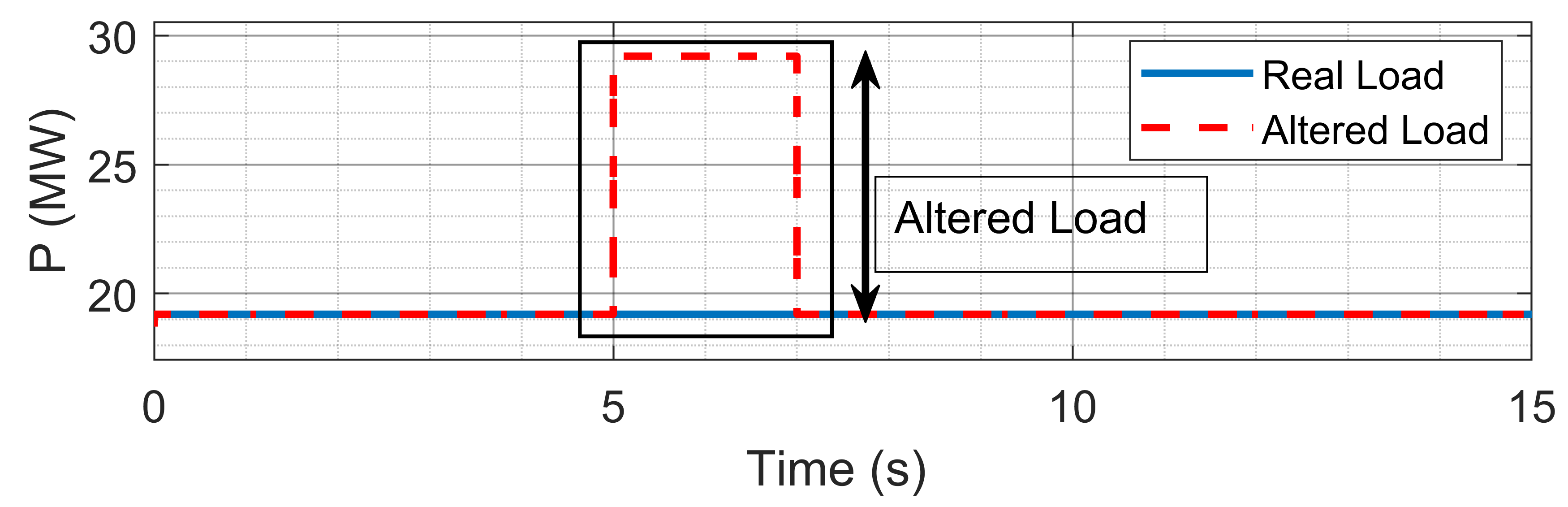}
    \caption{Load alteration}
    \label{fig:LAA_load}
\end{figure}

Fig.~\ref{fig:RMSE_PT} shows the calculated root mean square error (RMSE) for the power tracking with the increment in the LAA injection ($w_L$) from $10\%$ to $50\%$ of the measured load threshold. It can be seen that without the battery, the error increases. However, with an appropriate choice of battery sizing, the error is mitigated. The measured load is injected with a sudden ramp, altering the actual load. The spike in load injected is $50\%$ of the actual load value. The injection lasts for $1\textsf{sec}$. Fig.~\ref{fig:LAA_load} shows the actual load and the altered load in \textsf{MW}. Fig.~\ref{fig:power_no_battery} shows the impact of the injected LAA without the battery. The power sharing between the two generators, the tracking performance, and the error in the tracking active power are depicted. It can be clearly seen that there are imbalances in the power tracking.  

\begin{figure}
    \centering
    \includegraphics[width=0.9\textwidth]{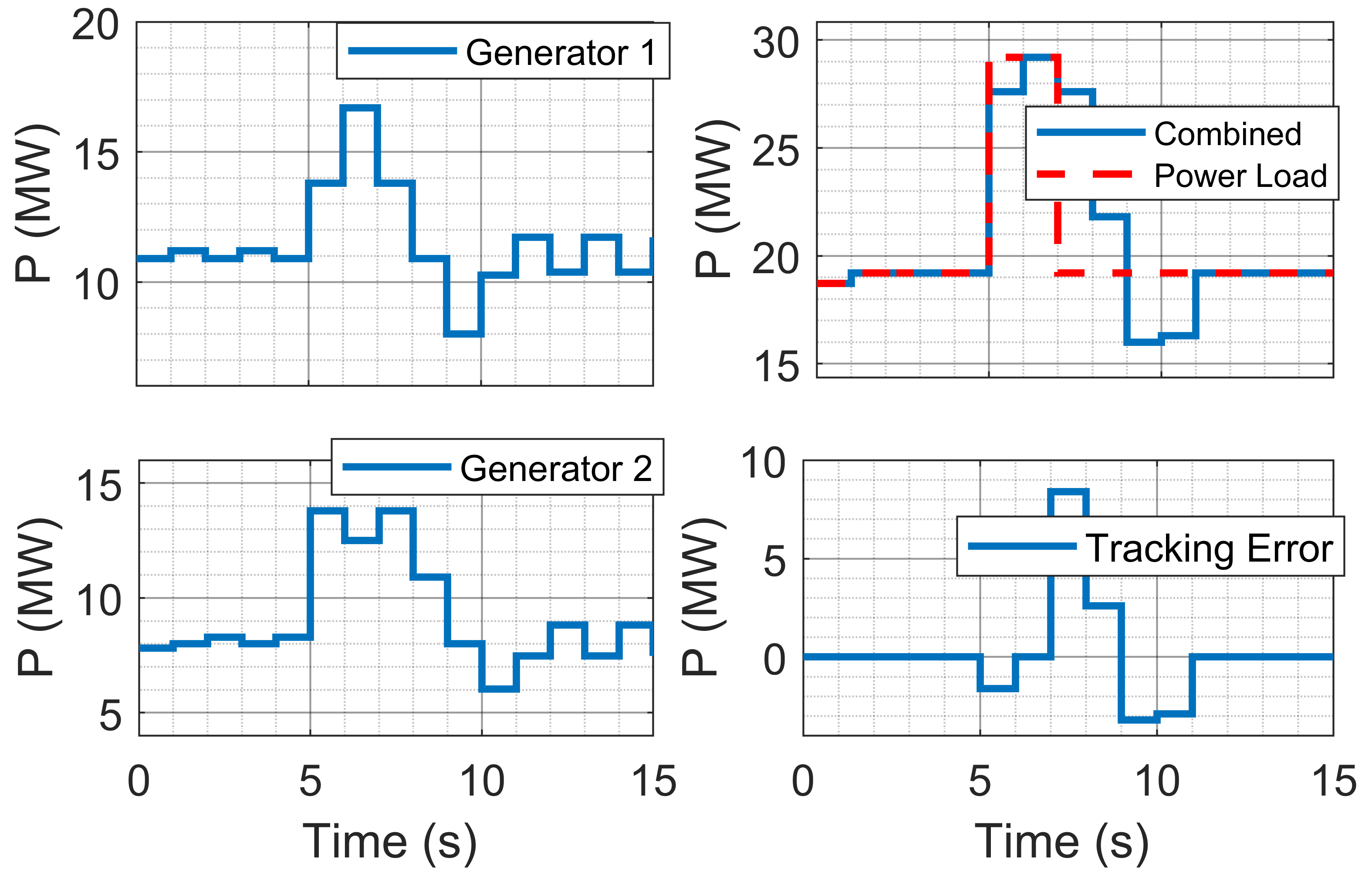}
    \caption{Power sharing under load altering without the presence of battery}
    \label{fig:power_no_battery}
\end{figure}

\begin{figure}
    \centering
    \includegraphics[width=0.9\textwidth]{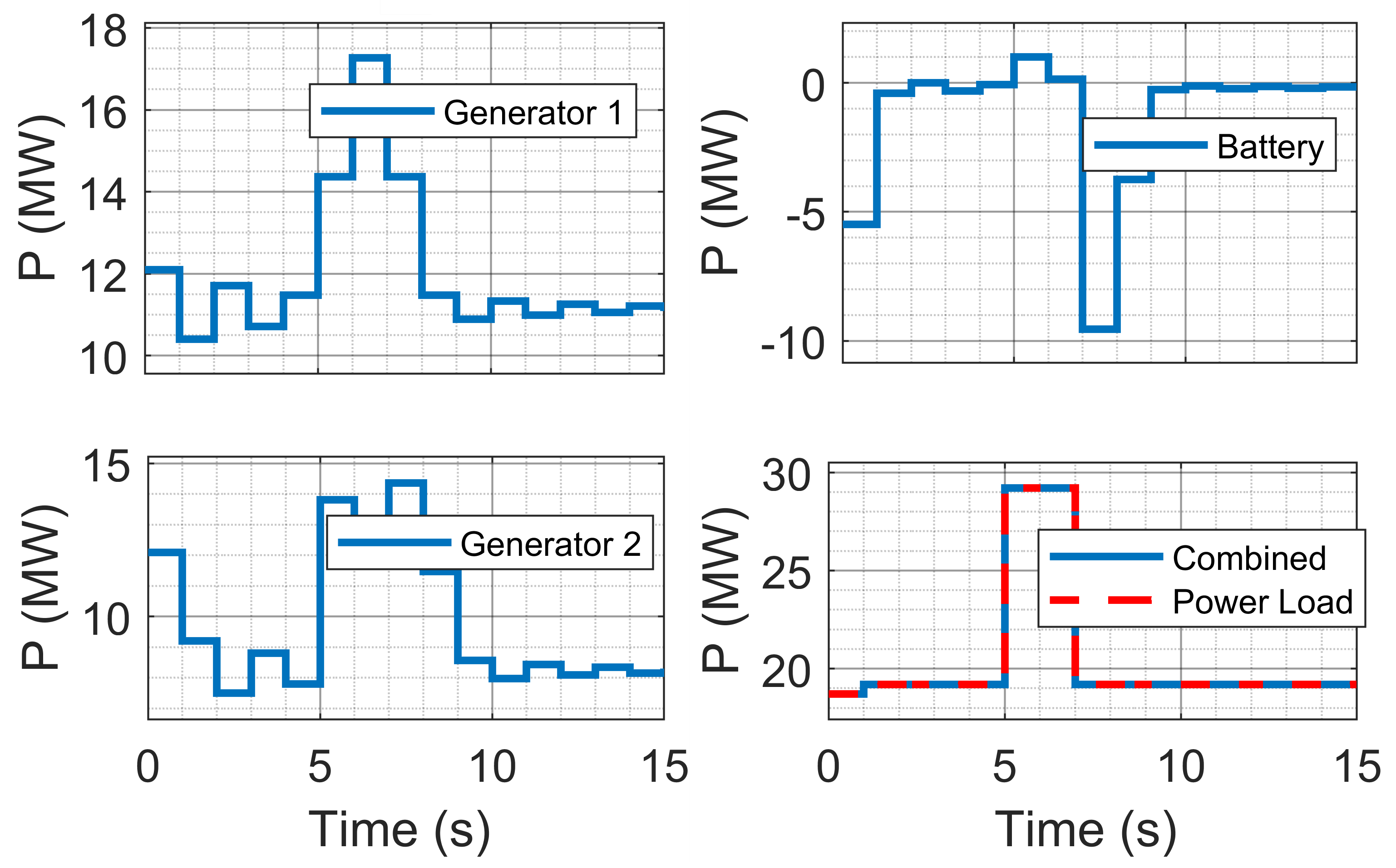}
    \caption{Power sharing under load altering with the presence of battery}
    \label{fig:power_battery}
\end{figure}

Fig.~\ref{fig:power_battery} shows the impact of the injected LAA with the battery. The power-sharing between the generators remains identical; however, the addition of the battery mitigates the impact of the injected LAA, ensuring the power tracking. It can be seen that the power balance constraint is satisfied, and there are no power imbalances arising from the LAA. This shows the effectiveness of the predictive energy management framework and the battery in tackling LAAs.

\section{CONCLUSION}
In this paper, a control methodology for a secure predictive energy management under load altering attacks, along with a model and the control of a microgrid is developed. The developed algorithm is validated using a real-time numerical simulation of a modified IEEE 9 bus system with a battery. Load measurements are varied, and the impact of the LAA with and without the presence of the battery element is studied. The effectiveness of the battery in mitigating the discrepancies caused by the load alteration was demonstrated. Future

\bibliographystyle{IEEEtran}
\bibliography{myreferences}

\begin{thebibliography}{10}
\providecommand{\url}[1]{#1}
\csname url@samestyle\endcsname
\providecommand{\newblock}{\relax}
\providecommand{\bibinfo}[2]{#2}
\providecommand{\BIBentrySTDinterwordspacing}{\spaceskip=0pt\relax}
\providecommand{\BIBentryALTinterwordstretchfactor}{4}
\providecommand{\BIBentryALTinterwordspacing}{\spaceskip=\fontdimen2\font plus
\BIBentryALTinterwordstretchfactor\fontdimen3\font minus \fontdimen4\font\relax}
\providecommand{\BIBforeignlanguage}[2]{{%
\expandafter\ifx\csname l@#1\endcsname\relax
\typeout{** WARNING: IEEEtran.bst: No hyphenation pattern has been}%
\typeout{** loaded for the language `#1'. Using the pattern for}%
\typeout{** the default language instead.}%
\else
\language=\csname l@#1\endcsname
\fi
#2}}
\providecommand{\BIBdecl}{\relax}
\BIBdecl

\bibitem{5546958}
J.~M. Guerrero, J.~C. Vasquez, J.~Matas, L.~G. de~Vicuna, and M.~Castilla, ``Hierarchical control of droop-controlled ac and dc microgrids—a general approach toward standardization,'' \emph{IEEE Transactions on Industrial Electronics}, vol.~58, no.~1, pp. 158--172, 2011.

\bibitem{7131791}
S.~Amini, H.~Mohsenian-Rad, and F.~Pasqualetti, ``Dynamic load altering attacks in smart grid,'' in \emph{2015 IEEE Power \& Energy Society Innovative Smart Grid Technologies Conference (ISGT)}, 2015, pp. 1--5.

\bibitem{10156477}
M.~Khajenejad, S.~Brown, and S.~Martínez, ``Distributed resilient interval observers for bounded-error lti systems subject to false data injection attacks,'' in \emph{2023 American Control Conference (ACC)}, 2023, pp. 3502--3507.

\bibitem{9206284}
O.~M. Anubi, C.~Konstantinou, C.~A. Wong, and S.~Vedula, ``Multi-model resilient observer under false data injection attacks,'' in \emph{2020 IEEE Conference on Control Technology and Applications (CCTA)}, 2020, pp. 1--8.

\bibitem{10383994}
Q.~Liu and H.~Zhang, ``Resilient control of dc microgrids against fdi attacks on communication links,'' in \emph{2023 62nd IEEE Conference on Decision and Control (CDC)}, 2023, pp. 5574--5579.

\bibitem{5976424}
A.-H. Mohsenian-Rad and A.~Leon-Garcia, ``Distributed internet-based load altering attacks against smart power grids,'' \emph{IEEE Transactions on Smart Grid}, vol.~2, no.~4, pp. 667--674, 2011.

\bibitem{7723861}
S.~Amini, F.~Pasqualetti, and H.~Mohsenian-Rad, ``Dynamic load altering attacks against power system stability: Attack models and protection schemes,'' \emph{IEEE Transactions on Smart Grid}, vol.~9, no.~4, pp. 2862--2872, 2018.

\bibitem{7436350}
S.~Amini, F.~Pasqualetti, and H.~Mohsenian\-Rad, ``Detecting dynamic load altering attacks: A data-driven time-frequency analysis,'' in \emph{2015 IEEE International Conference on Smart Grid Communications (SmartGridComm)}, 2015, pp. 503--508.

\bibitem{9838515}
G.~Rinaldi, M.~Cucuzzella, P.~P. Menon, A.~Ferrara, and C.~Edwards, ``Load altering attacks detection, reconstruction and mitigation for cyber-security in smart grids with battery energy storage systems,'' in \emph{2022 European Control Conference (ECC)}, 2022, pp. 1541--1547.

\bibitem{9308900}
J.~Ospina, X.~Liu, C.~Konstantinou, and Y.~Dvorkin, ``On the feasibility of load-changing attacks in power systems during the covid-19 pandemic,'' \emph{IEEE Access}, vol.~9, pp. 2545--2563, 2021.

\bibitem{10197626}
E.-N.~S. Youssef, F.~Labeau, and M.~Kassouf, ``Adversarial dynamic load-altering cyberattacks against peak shaving using residential electric water heaters,'' \emph{IEEE Transactions on Smart Grid}, vol.~15, no.~2, pp. 2073--2088, 2024.

\bibitem{9997100}
Z.~Chu, S.~Lakshminarayana, B.~Chaudhuri, and F.~Teng, ``Mitigating load-altering attacks against power grids using cyber-resilient economic dispatch,'' \emph{IEEE Transactions on Smart Grid}, vol.~14, no.~4, pp. 3164--3175, 2023.

\bibitem{10591466}
A.~Abazari, M.~M. Soleymani, M.~Ghafouri, D.~Jafarigiv, R.~Atallah, and C.~Assi, ``Deep learning detection and robust mpc mitigation for ev-based load-altering attacks on wind-integrated power grids,'' \emph{IEEE Transactions on Industrial Cyber-Physical Systems}, vol.~2, pp. 244--263, 2024.

\bibitem{9483046}
V.~Katewa and F.~Pasqualetti, ``Optimal dynamic load-altering attacks against power systems,'' in \emph{2021 American Control Conference (ACC)}, 2021, pp. 4568--4573.

\bibitem{du2020modeling}
W.~Du, F.~K. Tuffner, K.~P. Schneider, R.~H. Lasseter, J.~Xie, Z.~Chen, and B.~Bhattarai, ``Modeling of grid-forming and grid-following inverters for dynamic simulation of large-scale distribution systems,'' \emph{IEEE Transactions on Power Delivery}, vol.~36, no.~4, pp. 2035--2045, 2020.

\bibitem{rathnayake2021grid}
D.~B. Rathnayake, M.~Akrami, C.~Phurailatpam, S.~P. Me, S.~Hadavi, G.~Jayasinghe, S.~Zabihi, and B.~Bahrani, ``Grid forming inverter modeling, control, and applications,'' \emph{IEEE Access}, vol.~9, pp. 114\,781--114\,807, 2021.

\bibitem{Dixon}
W.~E. Dixon, A.~Behal, D.~M. Dawson, and S.~P. Nagarkatti, \emph{Nonlinear control of engineering systems: a Lyapunov-based approach}.\hskip 1em plus 0.5em minus 0.4em\relax Springer Science \& Business Media, 2003.

\bibitem{10.1115/DSCC2013-3979}
C.~Weng, J.~Sun, and H.~Peng, ``An open-circuit-voltage model of lithium-ion batteries for effective incremental capacity analysis,'' ser. Dynamic Systems and Control Conference, 10 2013.

\bibitem{9658612}
M.~M. Bijaieh, S.~Vedula, and O.~M. Anubi, ``Model and load predictive control for design and energy management of shipboard power systems,'' in \emph{2021 IEEE Conference on Control Technology and Applications (CCTA)}, 2021, pp. 607--612.

\bibitem{boyd_vandenberghe_2004}
S.~Boyd and L.~Vandenberghe, \emph{Convex Optimization}.\hskip 1em plus 0.5em minus 0.4em\relax Cambridge University Press, 2004.

\end{thebibliography}

\end{document}